\newtheorem{lemma}{Lemma}
\newtheorem{thm}{Theorem}
\newtheorem{prop}{Proposition}
\theoremstyle{remark}
\newtheorem{rmk}{Remark}
\theoremstyle{definition}
 \DeclareMathOperator\Ai{{Ai}}
 \numberwithin{equation}{section}
\numberwithin{equation}{section}
\newcounter{comment}
\def\a{\alpha}
\def\sgn{\textrm{sgn}}
\def\AS{\textrm{AS}}
\begin{document}

\title{On the quasi-Ablowitz-Segur and quasi-Hastings-McLeod solutions of the inhomogeneous Painlev\'{e} II equation}
\date{\today}
\author{Dan Dai$^{\ast}$ and Weiying Hu$^{\dag}$}

\maketitle

\begin{abstract}
   We consider the quasi-Ablowitz-Segur and quasi-Hastings-McLeod solutions of the inhomogeneous Painlev\'{e} II equation
   $$
   u''(x)=2u^3(x)+xu(x)-\alpha \qquad \textrm{for } \a \in \mathbb{R} \textrm{ and } |\a| > \frac{1}{2}.
   $$
   These solutions are obtained from the classical Ablowitz-Segur and Hastings-McLeod solutions via the B\"{a}cklund transformation, and satisfy the same asymptotic behaviors when $x \to \pm \infty$. For $|\a| > 1/2$, we show that the quasi-Ablowitz-Segur and quasi-Hastings-McLeod solutions possess $ [ \, |\alpha| + \frac{1}{2} \, ] $ simple poles on the real axis, which rigorously justifies the numerical results in Fornberg and  Weideman (\emph{Found. Comput. Math.}, \textbf{14} (2014), no. 5, 985--1016).
\end{abstract}


\vspace{2cm}

\noindent 2010 \textit{Mathematics Subject Classification}. 33E17, 34M55.

\noindent \textit{Keywords and phrases}: Painlev\'{e} II equation; Ablowitz-Segur solutions; Hastings-McLeod solutions; B\"{a}cklund transformation.


\vspace{5mm}

\hrule width 65mm

\vspace{2mm}

\begin{description}

\item \hspace*{5mm}$\ast$ Department of Mathematics, City University of
Hong Kong, Hong Kong. \\
Email: \texttt{dandai@cityu.edu.hk}

\item \hspace*{5mm}$\dag$ Department of Mathematics, City University of
Hong Kong, Hong Kong. \\
Email: \texttt{weiyinghu2-c@my.cityu.edu.hk} (corresponding author)

\end{description}

\newpage

\section{Introduction and statement of results}

\subsection{Ablowitz-Segur and Hastings-McLeod solutions}

We consider the following \emph{inhomogeneous} Painlev\'e II equation (PII)
\begin{equation}\label{PII-def}
   u''(x)=2u^3+xu-\a, \qquad \a \in \mathbb{R} \setminus \{0\}.
\end{equation}
When $\a = 0$, the above equation is reduced to the \emph{homogeneous} PII. It is well-known that, PII possesses two families of special solutions which are real and pole-free on the real axis: one family of these solutions is oscillatory and bounded, namely the \emph{Ablowitz-Segur} (AS) solutions; the other family is smooth and nonoscillatory, namely the \emph{Hastings-McLeod} (HM) solutions. Both families of solutions decay like $\a/x$ as $ x \to +\infty$. More precisely, they have the following behaviors at $x \to \pm \infty$.

\medskip
\noindent \textbf{Ablowitz-Segur solutions: $\a \in (-1/2,1/2)$.}
\medskip

Let $k_{\a}$ be a real parameter and $k_{\a}\in (-\cos \pi \a, \cos \pi \a)$. The AS solution $u_{\textrm{AS}}(x;\a)$ is a one-parameter family of solutions of inhomogeneous PII \eqref{PII-def}, which is continuous on the real line and has the following asymptotic behaviors:
 \begin{eqnarray}
    u_{\textrm{AS}}(x;\a)&=& B(x; \alpha) + k_{\a}\Ai(x)(1+O(x^{-3/4})), \qquad \textrm{as} \,\, x \to +\infty,\label{asy-pos-AS}\\
    u_{\textrm{AS}}(x;\a)&=&\frac{d}{(-x)^{1/4}}\cos\{\frac{2}{3}(-x)^{3/2}-\frac{3}{4}d^2\ln(-x) + \phi\} +O(|x|^{-1}),    \label{asy-neg-AS} \\
   &&\hspace{6.5cm} \quad \textrm{as}\,\, x\to -\infty,\nonumber
    \end{eqnarray}
where $\Ai(x)$ is the Airy function,
\begin{equation} \label{series-B}
  B(x; \alpha)\sim \frac{\alpha}{x}\sum_{n=0}^{\infty} \frac{a_n}{x^{3n}}, \qquad a_0 = 1
\end{equation}
and $a_{n+1}=(3n+1)(3n+2)a_n - 2\a^2 \sum_{k,l,m =0}^{n} a_k a_l a_m.$
The constants $d$ and $\phi$ in \eqref{asy-pos-AS} and \eqref{asy-neg-AS} satisfy the following connection formulas
\begin{eqnarray}
    d(k_{\a}) &=& \frac{1}{\sqrt{\pi}} \sqrt{-\ln(\cos^2(\pi \alpha)-k_{\a}^2)}, \label{d-k}\\
    \phi(k_{\a}) &=& -\frac{3}{2} d^2 \ln 2 +\arg \Gamma{\biggr(\frac{1}{2}id^2}\biggr)- \frac{\pi}{4} -\arg (-\sin \pi \a -k_{\a}i). \label{phi-k}
\end{eqnarray}

\medskip
\noindent \textbf{Hastings-McLeod solutions: $\a \in \mathbb{R}$.}

\medskip

The HM solutions $u_{\textrm{HM}}(x;\a)$ of the inhomogeneous PII \eqref{PII-def} are continuous on the real axis and have the following asymptotic behaviors
\begin{eqnarray}
    u_{\textrm{HM}}(x;\a)&=& B(x; \alpha) + \sigma \cos(\pi \a) \Ai(x)(1+O(x^{-3/4})), \qquad \textrm{as} \,\, x \to +\infty,\label{asy-pos-HM} \\
    u_{\textrm{HM}}(x;\a)&=&\sigma \sqrt{\frac{-x}{2}} - \frac{\a}{2x}+O(-x)^{-3/2},\hspace{2.5cm} \qquad \textrm{as}\,\, x\to -\infty, \label{asy-neg-HM}
\end{eqnarray}
where $\sigma\in \{+1, -1\}$ and the series $B(x; \alpha)$ is given in \eqref{series-B}.
In \eqref{asy-pos-HM} and \eqref{asy-neg-HM}, the coefficient $\sigma$ depends on $\a$ as follows:
\begin{equation}\label{condition-HM-pole-free}
  \textrm{(i) if } \a > -1/2: \quad \sigma =+1; \qquad \textrm{(ii) if } \a < 1/2: \quad \sigma = -1.
\end{equation}

From the above formulas, one immediately sees that, when $\a \in (-\infty, -1/2] \cup [1/2, +\infty) $, there is a \emph{unique} HM solution for each $\a$; while when $\a \in (-\frac{1}{2}, \frac{1}{2})$, there exist \emph{two} HM solutions. Depending on whether they are monotonic on the whole real axis, the solutions can be separated into two families, which are called the \emph{primary Hastings-McLeod solutions} (pHM) and \emph{secondary Hastings-McLeod solutions} (sHM) in Fornberg and Weideman \cite{Fornberg2014}; see Figure \ref{phm-shm} for a sketch of their properties. They satisfy the asymptotics in \eqref{asy-pos-HM} and \eqref{asy-neg-HM} with the parameter $\sigma$ given by
\begin{figure}[h]
 \centering
 \subfigure{
   \includegraphics[width=6cm]{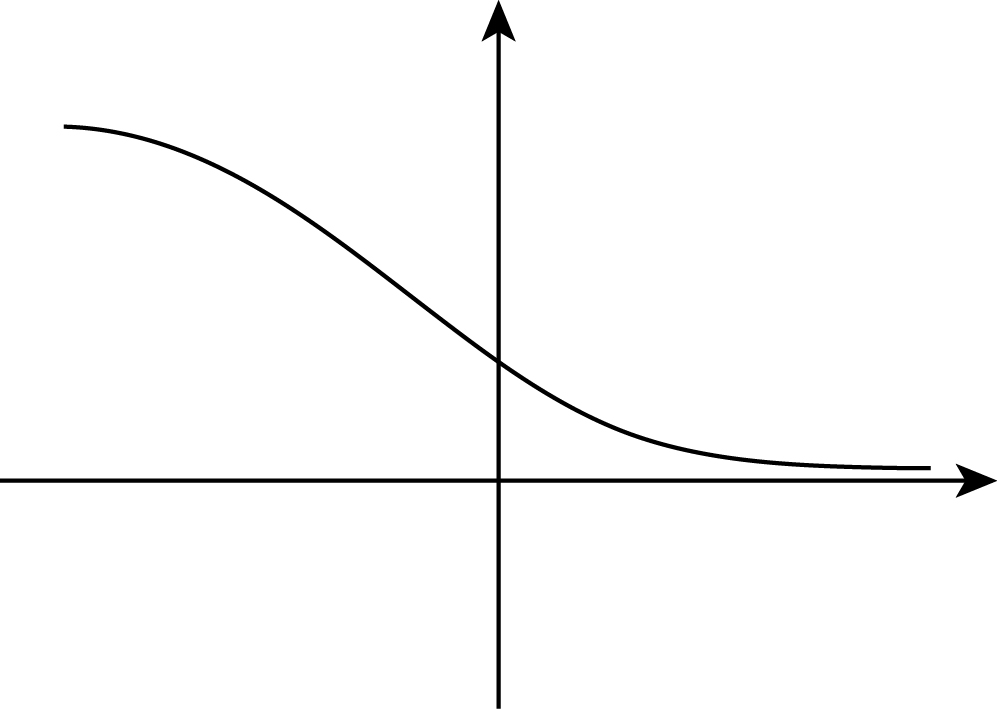}}
 \subfigure{
   \includegraphics[width=6cm]{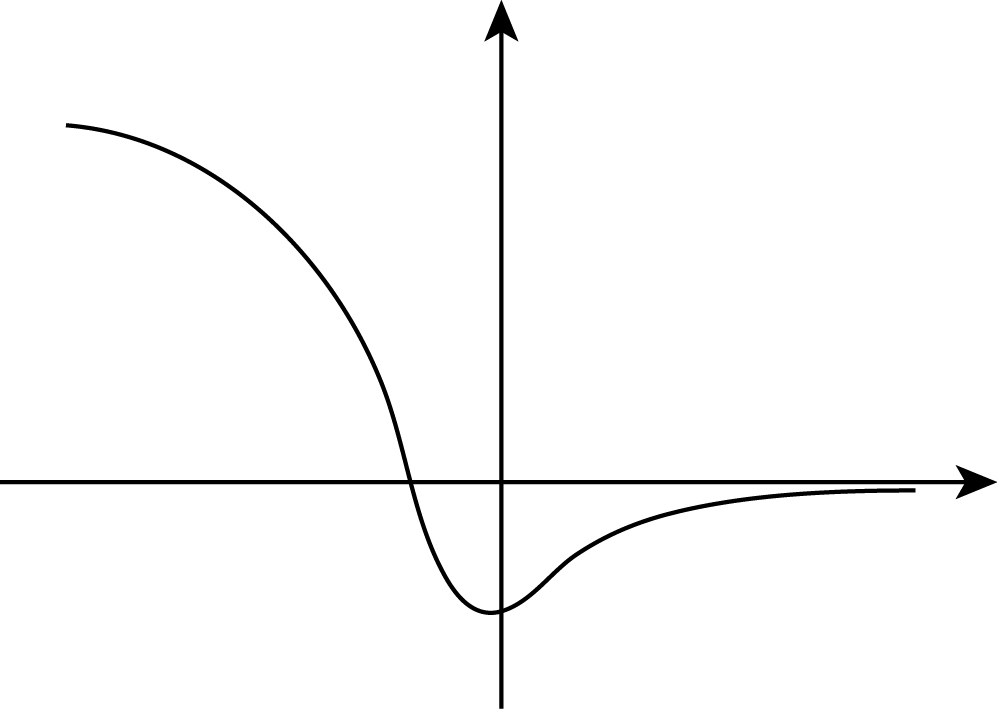}}
 \caption{The pHM (left) $u_{\textrm{pHM}}(x;\a)$ and sHM (right) $-u_{\textrm{sHM}}(x;\a)$ solutions of PII with the same parameter $0<\a <1/2$.}\label{phm-shm}
\end{figure}
\begin{align}
  \textrm{pHM (monotonic):} \qquad & \sigma = \begin{cases}
     \sgn(\a), & \textrm{if } \a \neq 0, \\ 1 \textrm{ or } -1, & \textrm{if } \a  = 0,
  \end{cases} \label{pHM-sigma-value} \\
  \textrm{sHM (not monotonic):} \qquad  & \sigma = -\sgn(\a), \qquad \textrm{if } \a \in (-1/2, 0) \cup (0, 1/2). \label{sHM-sigma-value}
\end{align}
From \eqref{asy-pos-HM}-\eqref{sHM-sigma-value}, one can see that the pHM solutions have same signs in their asymptotic behaviors, i.e., $u_{\textrm{pHM}}(x;\a) \sim \a/x$ as $x \to +\infty$ and $u_{\textrm{pHM}}(x;\a)\sim\sgn(\a) \sqrt{-x/2}$ as $x \to -\infty$ for $\a \neq 0$.
This is similar to the classical HM solution $u_{\textrm{HM}}(x;0)$ of the homogeneous PII whose asymptotic behaviors are $u_{\textrm{HM}}(x;0) \sim \Ai(x)$ as $x \to +\infty$ and $u_{\textrm{HM}}(x;\a)\sim \sqrt{-x/2}$ as $x \to -\infty$. It is well-known that the HM solution $u_{\textrm{HM}}(x;0)$ is monotonic on the real axis and possesses a unique inflexion point $x_0$ where $u_{\textrm{HM}}''(x_0;0) = 0$; see Hastings and McLeod \cite{HM1980}. From the numerical evidence in Fornberg and Weideman \cite[Fig. 10]{Fornberg2014}, the pHM solutions satisfy similar properties. For the sHM solutions, they have different signs in their asymptotics as $x \to \pm \infty$, and are no longer monotonic. Recently, these properties have been proved rigorously in Clerc et al. \cite{Clerc2017} and Troy \cite{Troy2018}.

The formulas \eqref{pHM-sigma-value} and \eqref{sHM-sigma-value} indicate that there exists a family of pHM solutions for any $\a \in \mathbb{R}$; and there is one additional family of sHM solutions for $\a \in (-1/2, 0) \cup (0, 1/2)$. This result is actually proved in Claeys, Kuijlaars and Vanlessen \cite[Theorem 1.1]{Claeys2008}. In \cite{Claeys2008}, the authors showed that, for $\a > - 1/2$, there exist the HM solutions which are pole-free on the real line and uniquely determined by the following asymptotic behaviors
\begin{equation*}
  u(x;\a) \sim \a /x , \quad  \textrm{as } x \to +\infty \quad \textrm{and} \quad u(x;\a) \sim \sqrt{-x/2} , \quad  \textrm{as } x \to -\infty.
\end{equation*}
With the classification in \eqref{pHM-sigma-value} and \eqref{sHM-sigma-value}, one can see that the above solutions are indeed the pHM and sHM solutions when $\a > 0$ and $-1/2<\a <0$, respectively. Using the following symmetry relation
\begin{equation}\label{sym-relation}
u(x;\a)=-u(x;-\a),
\end{equation}
one immediately gets the pHM and sHM solutions for $\a < 0$ and $0<\a <1/2$.

The AS and HM solutions for the homogeneous PII were first discovered by Ablowitz and Segur in \cite{Ablowitz1977-asymptotic,Segur1981} and Hastings and McLeod in  \cite{HM1980}, respectively. For the inhomogeneous PII, these solutions were obtained later by McCoy and Tang \cite{mccoy:Tang1986}, Its and Kapaev \cite{Its2003} and Kapaev \cite{Kapaev2004}. The rigorous justification of the asymptotic behaviors in \eqref{asy-pos-AS}-\eqref{asy-neg-AS} and \eqref{asy-pos-HM}-\eqref{asy-neg-HM}, as well as the connection formulas \eqref{d-k}-\eqref{phi-k}, has attracted a lot of research interest in the literature; see for example \cite{Bas:Cla:Law:McL1998,Clarkson1988,Deift1995,Kapaev1992} and the monograph by Fokas et al. \cite{Fokas2006}. All the AS and HM solutions are pole-free on the real axis; see Claeys, Kuijlaars and Vanlessen \cite{Claeys2008} and Dai and Hu \cite{Dai:Hu2017}. It is very interesting to note that these pHM and sHM solutions for the inhomogeneous PII play an important role in the study of nematic liquid crystals; see \cite{Clerc2017, Troy2018}. Recently, some novel solutions similar to the AS and HM solutions are obtained by Fornberg and Weideman \cite{Fornberg2014}. They are no longer pole-free but have finitely many poles on the real axis. We will discuss them in the coming section.

\subsection{Quasi-Ablowitz-Segur and quasi-Hastings-McLeod solutions}

It is a well-known fact that, PII transcendents for different parameters $\a$ are related to each other through the following B\"{a}cklund transformation:
\begin{equation}\label{B-transfmn}
u(x;\a)=-u(x;\a-1)+\frac{2\a-1}{2u^{2}(x;\a-1)-2u'(x;\a-1)+x};
\end{equation}
see \cite{Clarkson:NIST}. From the previous section, we know that the AS and sHM solutions exist only when $|\a| < 1/2$. Applying the B\"{a}cklund transformation to the AS and sHM solutions, we will get solutions for $|\a| > 1/2$. It is very interesting to see that the asymptotic behaviors as $x \to \pm \infty$ are reserved under the B\"{a}cklund transformation \eqref{B-transfmn}. The only difference is that, due to properties of the denominator
\begin{equation}\label{f-def}
f(x;\a):=2u^{2}(x;\a)-2u'(x;\a)+x,
\end{equation}
the solutions after the B\"{a}cklund transformation may not be pole-free on the real line. Fornberg and Weideman first observe such kind of solutions and name them the \emph{quasi-Ablowitz-Segur} (qAS) and \emph{quasi-Hastings-McLeod} (qHM) solutions; see \cite[Sec. 4.3]{Fornberg2014}.

Let us define the qAS and qHM solutions with more details below. Due to the symmetry relation \eqref{sym-relation}, we may assume $\a >0$.

\bigskip
\noindent \textbf{qAS solutions: $\a \in (n-1/2,n+1/2),$ $n \in \mathbb{N}$.}
\medskip

Let $k_{\a}$ be a real parameter and $k_{\a} \in (-|\cos \pi \a|, |\cos \pi \a|)$. The qAS solution $u_{\textrm{qAS}}(x;\a)$ is a one-parameter family of solutions of inhomogeneous PII \eqref{PII-def}, which satisfies the asymptotic behaviors in \eqref{asy-pos-AS} and \eqref{asy-neg-AS}, as well as the connection formulas \eqref{d-k} and \eqref{phi-k}.

\medskip
\noindent \textbf{qHM solutions: $\a \in (n-1/2,n+1/2),$ $n \in \mathbb{N}$.}
\medskip

The qHM solutions $u_{\textrm{qHM}}(x;\a)$ of the inhomogeneous PII \eqref{PII-def} have the following asymptotic behaviors
\begin{eqnarray}
    u_{\textrm{qHM}}(x;\a)&=& B(x; \alpha) - \cos(\pi \a) \Ai(x)(1+O(x^{-3/4})), \qquad \textrm{as} \,\, x \to +\infty, \label{asy-pos-qHM} \\
    u_{\textrm{qHM}}(x;\a)&=&-\sqrt{\frac{-x}{2}}- \frac{\a}{2x} +O(-x)^{-3/2}, \hspace{2.5cm}\quad \textrm{as}\,\, x\to -\infty, \label{asy-neg-qHM}
\end{eqnarray}
where the series $B(x; \alpha)$ is given in \eqref{series-B}.

\begin{rmk}\label{qHM-pHM}
The qHM solutions distinguish themselves from the pHM solutions in \eqref{pHM-sigma-value} for $\a > 1/2$ by having different signs in their asymptotics as $x \to \pm \infty$.
\end{rmk}

\begin{rmk}\label{HM-qHM}
If one applies the B\"{a}cklund transformation to the HM solutions $u_{\textrm{HM}}(x;\a)$ (including all the pHM, sHM and qHM solutions) to get a solution $u(x;\a+1)$, it is straightforward to verify that the leading term of the  asymptotics at $-\infty$ is still $\sgn(\a) \sqrt{-x/2}$ for $\a \neq 0$; while the leading term of the asymptotics at $+\infty$ becomes $(\a+1)/x$. Therefore, because the asymptotics of $u(x;\a+1)$ as $x\to \pm \infty$ keep the same sign for $\a>0$, we have
  \begin{equation} \label{HM-transform1}
  \begin{array}{l}
    u_{\textrm{pHM}}(x;\a) \mapsto u_{\textrm{pHM}}(x;\a+1), \\
     u_{\textrm{sHM}}(x;\a) \textrm{ and } u_{\textrm{qHM}}(x;\a) \mapsto u_{\textrm{qHM}}(x;\a+1),
  \end{array}
        \qquad \textrm{for } \a > 0.
  \end{equation}
  Since the term $\a /x \mapsto(\a+1)/x$ changes sign when $-1/2<\a<0$, we obtain
  \begin{equation}
       u_{\textrm{pHM}}(x;\a) \mapsto u_{\textrm{qHM}}(x;\a+1), \quad u_{\textrm{sHM}}(x;\a) \mapsto u_{\textrm{pHM}}(x;\a+1) \quad \textrm{for } -1/2<\a<0.
  \end{equation}
  For the case $\a = 0$, if we choose the following pHM solution with $\sigma = -1$ in \eqref{pHM-sigma-value}
  \begin{equation} \label{HMsol-1}
    u_{\textrm{pHM}}(x;0)\sim - \Ai(x), \quad \textrm{as } x \to +\infty \quad \textrm{and} \quad u_{\textrm{pHM}}(x;0)\sim - \sqrt{-x/2}, \quad \textrm{as } x \to -\infty,
  \end{equation}
  then the B\"{a}cklund transformation \eqref{B-transfmn} gives us
  \begin{equation} \label{HM-transform3}
    u(x;1)\sim 1/x, \quad \textrm{as } x \to +\infty \quad \textrm{and} \quad u(x;1)\sim - \sqrt{-x/2}, \quad \textrm{as } x \to -\infty,
  \end{equation}
  which is $u_{\textrm{qHM}}(x;1)$. Of course, if we put $\sigma = 1$, we get $u_{\textrm{pHM}}(x;1)$.
\end{rmk}

\begin{rmk}
  When applying the B\"{a}cklund transformation to get the qAS solutions, we have
  \begin{equation} \label{BT-coeff}
    \a \mapsto \a +1 \qquad \textrm{and} \qquad k_{\a} \mapsto k_{\a + 1} \ \textrm{ with } k_{\a+1} = -k_{\a},
  \end{equation}
  where $k_\a$ is given in \eqref{asy-pos-AS}. Similar sign change for the coefficient of the $\Ai(x)$ term also occurs while obtaining the qHM solutions, where $\cos(\pi \a)$ is mapped to $\cos(\pi (\a+1))$ in \eqref{asy-pos-HM} and \eqref{asy-pos-qHM}.
\end{rmk}

It is interesting to note that any qAS and qHM solutions defined above admit a B\"{a}cklund transformation \eqref{B-transfmn} with the
Ablowitz-Segur or Hastings-McLeod solution as the seed solution.
To see this, one can study the B\"{a}cklund transformation \eqref{B-transfmn} through Riemann-Hilbert (RH) problems; see Fokas et al. \cite[Sec. 6.1]{Fokas2006}. The AS(qAS) solutions correspond to the following special Stokes multipliers
\begin{equation}
  s_1 = - \sin(\pi \a) - i k_{\a}, \quad  s_2 =0, \quad  s_3 = - \sin(\pi \a) + i k_{\a}, \quad k_{\a} \in (-|\cos \pi \a|, |\cos \pi \a|)
\end{equation}
see \cite[(2.20)]{Dai:Hu2017}. When we change the parameters as in \eqref{BT-coeff}, the Stokes multipliers become $s_{k} \mapsto -s_{k}$, $k = 1,3$. Studying the difference between the two associated RH problems, we get the B\"{a}cklund transformation \eqref{B-transfmn}. Moreover, once the Stokes multipliers are fixed for the qAS solutions $u_{\textrm{qAS}}(x;\a)$, the asymptotics in \eqref{asy-pos-AS} and \eqref{asy-neg-AS} can be derived by using Deift-Zhou nonlinear steepest method uniquely; see for example \cite{Dai:Hu2017,Deift1995,Its2003,Kapaev2004}. Similar arguments also work for the qHM solutions $u_{\textrm{qHM}}(x;\a)$. Since we focus on the pole properties of the qAS and qHM solutions, we will not go into the detailed RH analysis in this paper.

\subsection{Our main results}

We will prove the following results about the poles of qAS and qHM solutions.

\begin{thm} \label{thm-pole-number}
  For $\a\in (n-\frac{1}{2}, n+\frac{1}{2})$ with $n \in \mathbb{N}$, let the qAS solutions $u_{\textrm{qAS}}(x;\a)$ and qHM solutions $u_{\textrm{qHM}}(x;\a)$ be the solution of PII \eqref{PII-def} with asymptotic behaviors given in \eqref{asy-pos-AS}-\eqref{asy-neg-AS} and \eqref{asy-pos-qHM}-\eqref{asy-neg-qHM}, respectively. Then, the qAS and qHM solutions have $n$ real poles.
\end{thm}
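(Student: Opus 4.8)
The plan is to follow the real poles and their residues through the $n$-fold B\"{a}cklund iteration. As recalled in the text (via the Stokes-multiplier/Riemann--Hilbert correspondence), $u_{\textrm{qAS}}(x;\alpha)$ with $\alpha\in(n-\tfrac12,n+\tfrac12)$ is obtained from the pole-free seed $u_{\textrm{AS}}(x;\alpha-n)$ by applying \eqref{B-transfmn} $n$ times, and likewise $u_{\textrm{qHM}}$ from a pole-free sHM or pHM seed; by \eqref{asy-neg-qHM} and Remark~\ref{HM-qHM} the asymptotics in \eqref{asy-pos-AS}--\eqref{asy-neg-AS} and \eqref{asy-pos-qHM}--\eqref{asy-neg-qHM} hold at every intermediate step (so each iterate is real and pole-free for large $|x|$). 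Since via \eqref{B-transfmn} the real poles of $u(x;\alpha+1)$ come from the zeros of $f(x;\alpha)$ and from the poles of $u(x;\alpha)$, the whole argument is an induction on $n$ that simultaneously tracks the total pole count and the number of poles with residue $+1$ and with residue $-1$ (all poles of PII solutions are simple, residue $\pm1$).

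First I would record, differentiating \eqref{f-def} and using \eqref{PII-def}, the linear identity
\[
f'(x;\alpha)+2\,u(x;\alpha)\,f(x;\alpha)=2\alpha+1 .
\]
Since $\alpha>-\tfrac12$ throughout, this forces $f'=2\alpha+1>0$ at every zero of $f(x;\alpha)$ at which $u(x;\alpha)$ is finite, so such an ``ordinary'' zero is simple and $f$ is strictly increasing through it. A short Laurent computation at the (simple) poles of the seed then shows: at a residue-$(+1)$ pole $x_p$ one has $f(x;\alpha)\sim 4/(x-x_p)^2\to+\infty$, a double pole of $f$ with $f>0$ on both sides; at a residue-$(-1)$ pole the singular parts of $2u^2$ and $-2u'$ cancel and $f(x;\alpha)\sim-(2\alpha+1)(x-x_p)$, so $f$ is analytic there with a simple zero at which $f'=-(2\alpha+1)<0$. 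Feeding this into \eqref{B-transfmn} yields the pole structure of $u(x;\alpha+1)$: a residue-$(+1)$ simple pole at each ordinary zero of $f(x;\alpha)$; a residue-$(-1)$ simple pole at each residue-$(+1)$ pole of $u(x;\alpha)$; and each residue-$(-1)$ pole of $u(x;\alpha)$ becomes a regular point, the two $(x-x_p)^{-1}$ terms cancelling.

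The heart of the proof is to count the ordinary zeros of $f(x;\alpha)$. Because the only poles of $f$ are double poles with $f\to+\infty$, no two consecutive zeros of $f$ on $\mathbb{R}$ can be of the same type: two successive ordinary zeros, or two successive ``exceptional'' ones (those sitting at residue-$(-1)$ poles of $u$), would force $f$ to pass from one sign to its opposite with no further zero, which the intermediate value theorem forbids on each smooth arc, the $+\infty$ double poles being unable to realise a $+\!\to\!-$ crossing. Hence the zeros of $f$ strictly alternate ordinary/exceptional. Combining this with the boundary behaviour --- $f(x;\alpha)<0$ as $x\to-\infty$ in both cases (for qAS the oscillatory term $-2u'=O((-x)^{1/4})$ is negligible against $x$; for qHM, \eqref{asy-neg-qHM} gives $f(x;\alpha)\sim-(2\alpha+1)/\sqrt{2|x|}<0$) and $f(x;\alpha)\sim x>0$ as $x\to+\infty$ --- the first and last zeros of $f$ are both ordinary, so the pattern is ordinary, exceptional, $\dots$, ordinary, and the number of ordinary zeros is exactly one more than the number of residue-$(-1)$ poles of $u(x;\alpha)$. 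Writing $p_+^{(n)}$ and $p_-^{(n)}$ for the numbers of residue-$(+1)$ and residue-$(-1)$ real poles after $n$ transformations (so $p_+^{(0)}=p_-^{(0)}=0$), the previous paragraph gives $p_+^{(n+1)}=p_-^{(n)}+1$ and $p_-^{(n+1)}=p_+^{(n)}$; hence the total $P^{(n)}:=p_+^{(n)}+p_-^{(n)}$ satisfies $P^{(n+1)}=P^{(n)}+1$, i.e.\ $P^{(n)}=n$, which is the assertion.

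The hard part will be making the alternation-plus-boundary argument airtight in the presence of the double poles of $f$, and in particular verifying that the sign of $f$ at $\pm\infty$ is genuinely preserved along the iteration so that no spurious ordinary zero can appear; the decisive leverage is the relation $f'+2uf=2\alpha+1$, which rigidifies the sign of $f'$ at every zero and thereby forces the count. The Laurent bookkeeping at the seed's poles --- the order/sign dichotomy between residue $+1$ (double pole of $f$, $f\to+\infty$) and residue $-1$ (simple zero of $f$ with $f'<0$) --- is routine but must be handled carefully, since it is exactly what produces the recursion $p_+^{(n+1)}=p_-^{(n)}+1$.
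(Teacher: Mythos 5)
Your proposal is correct and follows essentially the same route as the paper: iterate the B\"{a}cklund transformation from the pole-free AS/HM seed, use the identity $f'(x;\alpha)=-2u(x;\alpha)f(x;\alpha)+2\alpha+1$ to fix the sign of $f'$ at any zero where $u$ is finite, and carry out the Laurent analysis at residue-$\pm1$ poles (the paper's Lemmas \ref{lemma-u&f} and \ref{lemma-bt-u}) to see which poles are created, converted, or removed at each step. The only difference is bookkeeping: the paper counts zeros of $f$ interval by interval (at most one zero per continuity interval of $u$), verifies the cases $n=1,2,3$ explicitly and then inducts on the ordered pole configuration, whereas you package the same sign information as a global alternation of ``ordinary''/``exceptional'' zeros plus the boundary signs of $f$ at $\pm\infty$, giving the recursion $p_+^{(n+1)}=p_-^{(n)}+1$, $p_-^{(n+1)}=p_+^{(n)}$; both arguments rest on the same premise, taken from the Riemann--Hilbert discussion, that the qAS/qHM asymptotics are preserved along the B\"{a}cklund iteration.
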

\begin{rmk}
  Pole numbers of the qAS and qHM solutions on the real line have been  predicted by Fornberg and Weideman based on the numerical computations in \cite{Fornberg2014}. In the past a few years, Fornberg and Weideman \cite{Fornberg2011,Fornberg2015} have successfully developed the pole field solver (PFS) to compute the Painlev\'e transcendents in the complex plane efficiently and accurately. Recently, they further extend the PFS to study multivalued Painlev\'e transcendents on their Riemann surfaces; see Fasondini, Fornberg and Weideman \cite{Fas:Forn:Weid2017} .
\end{rmk}

Besides the pole numbers, our analysis gives more properties about the poles  on the real axis. It is well-known that the PII transcendents are meromorphic functions whose poles are all simple with residue $\pm 1$; see Gromak et al. \cite[Sec. 2]{Gromak2002}. Our second result shows the dynamics of these poles with respect to the parameter $\a$.

\begin{thm}\label{main-thm-AS}
For $\a\in (n-\frac{1}{2}, n+\frac{1}{2})$ with $n \in \mathbb{N}$, let $p_{i,\pm1}(\a)$ be the $i$-th real pole of $u_{\textrm{qAS}}(x;\a)$ or $u_{\textrm{qHM}}(x;\a)$ counting from the negative real axis, where the subscript $\pm 1$ indicates the residue of the pole is 1 or $- 1$. Then, the poles $p_{i,\pm1}(\a)$ satisfy the following properties:
\begin{itemize}
 \item[(a)] The residue of the smallest pole must be 1. Moreover, it is strictly decreasing with respect to $\a$, i.e.,
 \begin{equation}
   p_{1,+1}(\a) > p_{1,+1}(\a+1) > p_{1,+1}(\a+2) > \cdots .
 \end{equation}

 \item[(b)] The poles with residue $\pm 1$ interlace on the real axis, that is,
 \begin{equation}
 \begin{split}
   &p_{1, +1}(\a)<p_{2, -1}(\a)< \cdots < p_{n, +1}(\a), \quad \textrm{if $n$ is odd,} \\
   &p_{1, +1}(\a)<p_{2, -1}(\a)<\cdots<p_{n, -1}(\a)  , \quad \textrm{if $n$ is even.}
 \end{split}
 \end{equation}

 \item[(c)] All poles of $u(x;\a)$ with residue $+1$ become poles of $u(x;\a+1)$ with residue $-1$ via the B\"{a}cklund transformation, i.e. $p_{i+1, -1}(\a+1) = p_{i, +1}(\a)$; while all poles of $u(x;\a)$ with residue $-1$ are regular points of $u(x;\a+1)$.

 \item[(d)] The residue of the largest pole is $1$ and $-1$ when $n$ is odd and even, respectively. They are increasing with respect to $\a$
 \begin{equation}
  \begin{split}
       & p_{n,+1}(\a) = p_{n+1,-1}(\a+1) < p_{n+2,+1}(\a+2) = p_{n+3,-1}(\a+3) < \cdots , \quad \textrm{if $n$ is odd}, \\
       & p_{n,-1}(\a) < p_{n+1,+1}(\a+1) = p_{n+1,-1}(\a+2) < \cdots , \hspace{3.1cm}\quad \textrm{if $n$ is even}.
     \end{split}
 \end{equation}
\end{itemize}
\end{thm}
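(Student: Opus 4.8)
The plan is to combine a local Laurent analysis at the poles, the behaviour of the denominator $f(x;\a)$ of \eqref{f-def} as $x\to\pm\infty$, and the exact pole count of Theorem~\ref{thm-pole-number}, organised as an induction on $n$ driven by the B\"acklund transformation \eqref{B-transfmn}, which I write in the form $u(x;\a+1)=-u(x;\a)+(2\a+1)/f(x;\a)$ (note $2\a+1>0$ since $\a>0$). Recall that every PII transcendent has only simple poles of residue $\pm1$ \cite{Gromak2002}. Near a pole $x_0$ of $u(\cdot;\a)$ of residue $\epsilon$, substitution into \eqref{PII-def} gives $u(x;\a)=\frac{\epsilon}{x-x_0}-\frac{\epsilon x_0}{6}(x-x_0)+\frac{\a-\epsilon}{4}(x-x_0)^2+\cdots$, and plugging this into \eqref{f-def} shows that $f(\cdot;\a)$ has a double pole, with $f\to+\infty$ from both sides, when $\epsilon=+1$, whereas when $\epsilon=-1$ the $(x-x_0)^{-2}$ and constant terms cancel and $f(x;\a)=-(2\a+1)(x-x_0)+O((x-x_0)^2)$, a simple zero of negative slope. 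Feeding these into the B\"acklund transformation yields the dictionary: a residue-$+1$ pole of $u(\cdot;\a)$ is a residue-$-1$ pole of $u(\cdot;\a+1)$ at the same point; a residue-$-1$ pole of $u(\cdot;\a)$ is a regular point of $u(\cdot;\a+1)$; and at a zero $x_1$ of $f(\cdot;\a)$ that is \emph{not} a pole of $u(\cdot;\a)$ (a ``free'' zero) the function $u(\cdot;\a+1)$ has a pole of residue $\sgn f'(x_1;\a)$, necessarily simple since a zero of $f$ of order $\ge 2$ at a regular point of $u(\cdot;\a)$ would force a multiple pole of $u(\cdot;\a+1)$. Thus the poles of $u(\cdot;\a+1)$ are precisely the residue-$+1$ poles of $u(\cdot;\a)$ (now of residue $-1$) together with the free zeros of $f(\cdot;\a)$; this already establishes the residue-transfer and vanishing statements in~(c).

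Next I would record the sign of $f(\cdot;\a)$ near infinity. Substituting \eqref{asy-pos-AS}--\eqref{asy-neg-AS} for qAS, and \eqref{asy-pos-qHM}--\eqref{asy-neg-qHM} for qHM, into \eqref{f-def}, one obtains $f(x;\a)\sim x$ as $x\to+\infty$ in all cases, while as $x\to-\infty$ one gets $f(x;\a)\to-\infty$ for qAS and, after the cancellation $2u^2+x=2\cdot(-x/2)+x+\cdots$, $f(x;\a)=-\frac{2\a+1}{\sqrt2}(-x)^{-1/2}+\cdots\to0^{-}$ for qHM; in either case $f(\cdot;\a)$ is negative near $-\infty$ and positive near $+\infty$. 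Now I run the induction, the base cases being $\a\in(-\tfrac12,\tfrac12)$ (no poles) and $\a\in(\tfrac12,\tfrac32)$ (one pole, of residue $+1$ because $f(\cdot;\a-1)$ has a single zero and it is an upward crossing). Assume $u(\cdot;\a)$ has $n$ simple real poles whose residues interlace as $+1,-1,+1,\dots$. On each bounded interval between consecutive poles of $u(\cdot;\a)$ and on the two unbounded ends, $f(\cdot;\a)$ is continuous; by the first paragraph it tends to $+\infty$ at each residue-$+1$ pole, changes sign from $+$ to $-$ as $x$ increases past each residue-$-1$ pole, and has the end signs just recorded. A sign chase then shows that $f(\cdot;\a)$ has an odd number ($\ge 1$) of (simple) zeros on every interval whose left endpoint is $-\infty$ or a residue-$-1$ pole and whose right endpoint is a residue-$+1$ pole or $+\infty$, and an even number ($\ge 0$) of zeros on every other interval; counting the former together with the $\lceil n/2\rceil$ persisting poles shows $u(\cdot;\a+1)$ has at least $n+1$ poles.

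By Theorem~\ref{thm-pole-number} this count equals $n+1$ exactly, so all the inequalities are equalities: $f(\cdot;\a)$ has exactly one free zero on each interval of the first type, none elsewhere, and every free zero is an upward crossing, hence of residue $+1$. Consequently the poles of $u(\cdot;\a+1)$ are the new free zeros (residue $+1$) and the persisting poles (residue $-1$), and in increasing order they alternate starting with a new zero; this gives (b) and the statement in (a) that the smallest pole has residue $+1$, and reading off which persisting pole occupies which position gives the identification $p_{i+1,-1}(\a+1)=p_{i,+1}(\a)$ in (c). For the monotonicity in (a): since $f(\cdot;\a)$ is negative near $-\infty$ and tends to $+\infty$ at $p_{1,+1}(\a)$, its smallest free zero --- which is $p_{1,+1}(\a+1)$ --- lies in $(-\infty,p_{1,+1}(\a))$, so $p_{1,+1}(\a+1)<p_{1,+1}(\a)$; iterating yields the whole chain.

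For (d), I would look at the largest pole of $u(\cdot;\a)$. If $n$ is odd it has residue $+1$, and $f(\cdot;\a)$ is $+\infty$ on both sides of it, so it has no free zero beyond it and merely persists as the largest pole of $u(\cdot;\a+1)$ with residue $-1$; at the next step that pole, now of residue $-1$, disappears, and the largest pole of $u(\cdot;\a+2)$ is the largest free zero of $f(\cdot;\a+1)$, which lies to its right because $f(\cdot;\a+1)$ is negative just to the right of its largest (downward) zero and tends to $+\infty$. If $n$ is even the largest pole has residue $-1$, so $f(\cdot;\a)$ is negative just to its right and there is one more free zero beyond it, producing a pole of $u(\cdot;\a+1)$ strictly to its right. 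Threading these two observations through successive B\"acklund transformations produces the chains of equalities and strict inequalities in (d). The step I expect to be the main obstacle is the sign/zero bookkeeping for $f(x;\a)$ on the bounded intervals between poles of $u$ --- in particular the exclusion of spurious zeros --- which genuinely relies on the \emph{exact} count of Theorem~\ref{thm-pole-number}, an elementary sign argument alone yielding only the lower bound $n+1$; a secondary technical nuisance is the delicate cancellation in the qHM asymptotics of $f$ at $-\infty$, which must be carried to order $(-x)^{-1/2}$ to read off the correct (negative) sign.
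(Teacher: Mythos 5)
Your strategy---Laurent expansions at the poles, the signs of $f(x;\a)$ at $\pm\infty$, and an induction on $n$ driven by the B\"acklund transformation---is essentially the paper's, and your parity/sign chase correctly produces the lower bound of $n+1$ poles for $u(\cdot;\a+1)$ together with the qHM cancellation at $-\infty$. The genuine gap is the step you yourself single out as the main obstacle: to exclude spurious zeros of $f(\cdot;\a)$ and to force every free zero to be an upward crossing, you invoke the exact pole count of Theorem \ref{thm-pole-number}. But in this paper Theorem \ref{thm-pole-number} is not an independent input: it is proved \emph{simultaneously} with Theorem \ref{main-thm-AS}, by precisely the induction you are running (the pole-count step in the paper uses the residue and interlacing structure of the induction hypothesis). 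So, as written, your argument is circular --- it reduces Theorem \ref{main-thm-AS} to a statement whose only available proof is the argument you are trying to complete, and your own parity count concedes that the elementary sign argument alone gives only ``at least $n+1$''.

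The missing idea is the paper's Lemma \ref{lemma-u&f}: differentiating \eqref{f-def} and using \eqref{PII-def} gives the identity $f'(x;\a)=-2u(x;\a)f(x;\a)+2\a+1$, so at any zero of $f(\cdot;\a)$ lying in an interval where $u(\cdot;\a)$ is continuous one has $f'=2\a+1>0$. This closes the gap twice over: (i) $f$ can have at most one zero in each interval of continuity of $u$ (two consecutive simple zeros would need derivatives of opposite sign, but all such derivatives equal $2\a+1$), so the exact count $n+1$ and the interlacing come out of the induction rather than being assumed, and Theorem \ref{thm-pole-number} is obtained along the way; and (ii) the slope at a free zero is exactly $2\a+1$, so the residue of the new pole of $u(\cdot;\a+1)$ there is $(2\a+1)/f'(x_1;\a)=+1$ --- your dictionary entry ``residue $=\sgn f'(x_1;\a)$'' is only justified by appealing, in addition, to the general fact that PII residues are $\pm1$, whereas the identity gives it directly. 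With this lemma in place of the appeal to Theorem \ref{thm-pole-number}, your bookkeeping for parts (a)--(d) goes through essentially as in the paper.
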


The properties in the above theorems can be summarized in the following figure.
\begin{figure}[h]
 \begin{center}
   \includegraphics[width=15cm]{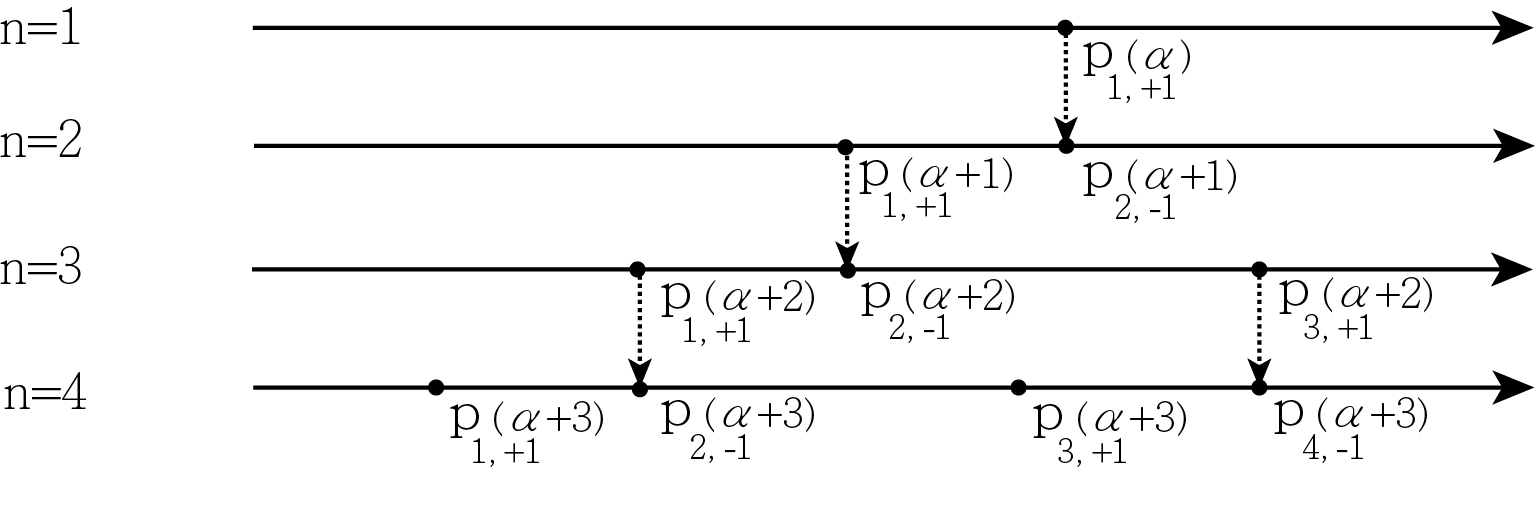}
  \end{center}
 \caption{The poles locations of qAS and qHM solutions of PII on $\mathbb{R}$.}\label{poles}
\end{figure}

The rest of this paper is arranged as follows. In Section \ref{sec-pre-proof}, some properties for the general PII transcendent $u(x;\a)$ and the function $f(x;\a)$ in \eqref{f-def} are provided. Then, in Section \ref{sec-main-proof}, we prove our main results by mathematical induction.

\section{Properties of the general PII transcendents}\label{sec-pre-proof}

First, let us derive some relations between a general solution $u(x;\a)$ and the denominator $f(x;\a)$ in the B\"{a}cklund transformation \eqref{B-transfmn}.

\begin{lemma} \label{lemma-u&f}
  The functions $u(x;\a)$ and $f(x;\a)$ satisfy the following relations:
  \begin{itemize}
    \item[(i)] If $u(x;\a)$ is continuous and differentiable at $x_0$ such that $f(x_0;\a)=0$, then $f'(x_0;\a) = 2\a +1$.

    \item[(ii)] In any interval where $u(x;\a)$ is continuous, $f(x;\a)$ has at most one simple zero when $\a \neq -1/2$.
  \end{itemize}
\end{lemma}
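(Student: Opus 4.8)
The plan is to reduce both parts to a single first-order linear ODE satisfied by $f(x;\a)$. Differentiating the definition $f = 2u^2 - 2u' + x$ and using the PII equation $u'' = 2u^3 + xu - \a$ to eliminate $u''$, one computes
\begin{equation*}
  f'(x;\a) = 4uu' - 2u'' + 1 = 4uu' - 4u^3 - 2xu + 2\a + 1 = -2u(x;\a)\,f(x;\a) + (2\a+1),
\end{equation*}
which is valid at every point where $u(x;\a)$ is regular. This identity is the heart of the lemma, and both statements follow from it with little extra work.

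For part (i), one simply evaluates the identity at $x_0$: since $f(x_0;\a)=0$, it collapses to $f'(x_0;\a)=2\a+1$. The hypothesis that $u(x;\a)$ is continuous and differentiable at $x_0$ is exactly what guarantees that $x_0$ is not a pole, so that PII holds there and the identity above is available.

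For part (ii), the same evaluation already shows that \emph{every} zero of $f$ in such an interval is simple, because $f'(x_0;\a)=2\a+1\neq 0$ whenever $\a\neq-1/2$. To rule out a second zero, the plan is to use the integrating factor $\mu(x)=\exp\!\big(2\int_{x_1}^{x}u(t;\a)\,dt\big)$, which is well defined and strictly positive on any interval $I$ on which $u(\cdot;\a)$ is continuous. If $x_1\in I$ is a zero of $f$, then $(\mu f)'=(2\a+1)\mu$ together with $\mu(x_1)f(x_1)=0$ integrates to
\begin{equation*}
  \mu(x)\,f(x;\a) = (2\a+1)\int_{x_1}^{x}\mu(t)\,dt, \qquad x\in I .
\end{equation*}
Since $\mu>0$ everywhere, the right-hand side is nonzero for each $x\in I$ with $x\neq x_1$, so $f(\cdot;\a)$ cannot vanish again in $I$; hence it has at most one (necessarily simple) zero there.

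The argument is short, and there is no serious obstacle — the only point requiring care is the domain: the integrating factor must be formed over an interval that avoids the poles of $u(\cdot;\a)$, which is precisely the setting assumed in the statement. (A Rolle-type alternative for part (ii) is possible — between two zeros of $f$ there would be an interior critical point $x_3$ with $f'(x_3)=0$ but $f(x_3)\neq 0$, forcing $2\a+1=2u(x_3)f(x_3)$ with a sign that contradicts the common value $f'(x_1)=f'(x_2)=2\a+1$ at the two zeros — but the integrating-factor route is cleaner and I would use it.)
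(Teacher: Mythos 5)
Your proof is correct, and it rests on exactly the same key identity as the paper, namely $f'(x;\a) = -2u(x;\a)f(x;\a) + 2\a+1$, derived the same way from the PII equation; part (i) is then the identical one-line evaluation. Where you diverge is in part (ii): the paper argues by contradiction, noting that if $f$ had two (adjacent) zeros in an interval where $u$ is continuous, both zeros would be simple and the derivative of $f$ would have to change sign between consecutive simple zeros, contradicting the fact from part (i) that $f'$ equals the \emph{same} nonzero constant $2\a+1$ at every zero. You instead solve the linear ODE outright with the integrating factor $\mu(x)=\exp\bigl(2\int_{x_1}^{x}u(t;\a)\,dt\bigr)$, obtaining $\mu(x)f(x;\a)=(2\a+1)\int_{x_1}^{x}\mu(t)\,dt$, which is nonzero for every $x\neq x_1$ in the interval since $\mu>0$. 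Your route is slightly stronger and arguably cleaner: it gives an explicit representation showing $f$ cannot vanish anywhere else (not merely that a second zero leads to a sign contradiction), and it avoids the mildly informal ``adjacent zeros have derivatives of opposite sign'' step in the paper. The paper's argument, in turn, is shorter and needs nothing beyond part (i) plus simplicity of zeros. One small caveat: your parenthetical Rolle-type alternative is not fully worked out as stated (the sign of $2u(x_3)f(x_3)$ at the interior critical point is not immediately determined), but since your main integrating-factor argument is complete, this does not affect the validity of the proof.
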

\begin{proof}
From the definition of $f(x;\a)$ in \eqref{f-def}, we have
\begin{equation*}
  f'(x;\a) = 4 u'(x;\a) u(x;\a ) - 2 u''(x;\a) + 1.
\end{equation*}
As $u(x;\a )$ satisfies the PII equation \eqref{PII-def}, it follows from the above formula
\begin{equation}\label{f-der}
f'(x;\a)=-2u(x;\a)f(x;\a)+2\a+1.
\end{equation}
This immediately gives us part (i) of the lemma.

We will prove the second part by contradiction. Suppose that there are two adjacent zeros $x_0$ and $\tilde{x}_0$ of $f(x;\a)$ in the interval $I$ where $u(x;\a)$ is continuous. According to the definition of $f(x;\a)$ in \eqref{f-def}, we know $f(x;\a)$ is continuous and differentiable in $I$.
If $\a \neq -1/2$, then the zeros of $f(x;\a-1)$ must be simple, since
\begin{equation}
f'(x;\a)= 2\a+1 \neq 0.
\end{equation}
Thus, we have
\begin{equation*}
f(x_0;\a)=f(\tilde{x}_0;\a)=0 \quad \textrm{and} \quad f'(x_0;\a)f'(\tilde{x}_0;\a)<0.
\end{equation*}
However, part (i) of the lemma also tells us
\begin{equation}
f'(x_0;\a)=f'(\tilde{x}_0;\a)= 2\a+1,
\end{equation}
which yields a contradiction. Therefore, $f(x;\a)$ has at most one zero in $I$.
\end{proof}

Next, we study the pole properties under the B\"{a}cklund transformation.

\begin{lemma} \label{lemma-bt-u}
Let the solutions $u(x;\a -1)$ and $u(x;\a)$ be related via the B\"{a}cklund transformation \eqref{B-transfmn}. Then, we have
\begin{itemize}
  \item[(i)] The poles of $u(x;\a -1)$ with residue $+1$ are poles of $u(x;\a)$ with residue $-1$.

  \item[(ii)] If $p_{-1}$ is a pole of $u(x;\a -1)$ with residue $-1$, then $p_{-1}$ is a regular point of $u(x;\a)$.
\end{itemize}
\end{lemma}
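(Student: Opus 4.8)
The plan is to analyze the B\"{a}cklund transformation \eqref{B-transfmn} near a pole $p$ of the seed solution $u(x;\a-1)$ by writing down the Laurent expansion of $u(x;\a-1)$ there and tracking what happens to the denominator $f(x;\a-1) = 2u^2(x;\a-1) - 2u'(x;\a-1) + x$. Since PII transcendents are meromorphic with only simple poles of residue $\pm 1$ (as recalled from Gromak et al. \cite{Gromak2002}), near a pole $p$ of residue $\varepsilon \in \{+1,-1\}$ we may write
\begin{equation*}
  u(x;\a-1) = \frac{\varepsilon}{x-p} + c_0 + c_1 (x-p) + c_2 (x-p)^2 + \cdots,
\end{equation*}
where the coefficients $c_j$ are determined recursively by substituting this series into PII \eqref{PII-def} with parameter $\a-1$. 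A short computation (matching the most singular terms) shows that $c_0 = 0$ and $c_1 = -\varepsilon p/6$, and — crucially — that the next coefficient $c_2$ is a free constant while the coefficient of $(x-p)^3$ is forced to equal $\tfrac{1}{4}(1 - \varepsilon(2(\a-1)+1)) = \tfrac{1}{4}(1 - \varepsilon(2\a-1))$; this is the standard resonance structure of PII.

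Next I would substitute this Laurent data into $f(x;\a-1)$. The leading terms are
\begin{equation*}
  f(x;\a-1) = 2\left(\frac{\varepsilon}{x-p}\right)^2 - 2\left(\frac{-\varepsilon}{(x-p)^2}\right) + O\!\left(\frac{1}{x-p}\right) + \cdots
  = \frac{2 + 2\varepsilon}{(x-p)^2} + \cdots.
\end{equation*}
Hence the two cases split immediately: when $\varepsilon = +1$, the double-pole terms survive and $f(x;\a-1)$ itself has a double pole at $p$, so the fraction $\dfrac{2(\a-1)-1}{f(x;\a-1)} = \dfrac{2\a-3}{f(x;\a-1)}$ vanishes to second order at $p$ and cannot cancel the simple pole $\varepsilon/(x-p) = 1/(x-p)$ coming from $-u(x;\a-1)$ in \eqref{B-transfmn}; therefore $u(x;\a)$ has a simple pole at $p$. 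Reading off its residue from $-u(x;\a-1) = -1/(x-p) + \cdots$ and checking that the correction term contributes nothing to the residue gives residue $-1$. This proves part (i). When $\varepsilon = -1$, the leading $1/(x-p)^2$ terms in $f$ cancel, so one must expand $f(x;\a-1)$ more carefully; I expect to find $f(x;\a-1) = \dfrac{a_{-1}}{x-p} + a_0 + \cdots$ with $a_{-1} \neq 0$ (it should come out proportional to $2c_0 \cdot(\text{stuff}) - 2c_1 \cdot 0 + \ldots$, i.e. determined by the subleading Laurent coefficients and generically nonzero), so that $\dfrac{2\a-3}{f(x;\a-1)}$ has a simple zero at $p$ with a nonzero linear term whose coefficient is exactly $(2\a-3)/a_{-1}$. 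Meanwhile $-u(x;\a-1) = +1/(x-p) + \cdots$ has a simple pole with residue $+1$. The point of part (ii) is that these do not combine to leave a pole — rather, I claim the simple pole of $-u(x;\a-1)$ is precisely cancelled. To see this I would invoke Lemma \ref{lemma-u&f}(i): away from poles, at a zero $x_0$ of $f$ one has $f'(x_0;\a-1) = 2(\a-1)+1 = 2\a-1$; the analogous local identity, obtained from \eqref{f-der} which holds as an identity of meromorphic functions, pins down $a_{-1}$ in terms of the residue $\varepsilon$. Concretely, multiplying \eqref{f-der} (with parameter $\a-1$) by $(x-p)$ and taking the limit $x \to p$ forces $a_{-1} = -2\varepsilon \cdot(2(\a-1)+1)\big/(\ldots)$; with $\varepsilon=-1$ this makes the cancellation in \eqref{B-transfmn} exact, so $p$ is a regular point of $u(x;\a)$.

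The main obstacle will be the bookkeeping in the $\varepsilon = -1$ case: one must expand $f(x;\a-1)$ to enough orders to be sure its residue-type coefficient $a_{-1}$ is nonzero and has exactly the value needed for the pole of $-u(x;\a-1)$ to cancel against the zero of $(2\a-3)/f$, with no leftover pole and indeed a genuine regular point (finite value). The cleanest route, which I would adopt to avoid grinding through PII's recursion, is to feed \eqref{f-der} back into \eqref{B-transfmn}: since $f' = -2u f + (2\a-1)$ with $u = u(x;\a-1)$, one can rewrite $\dfrac{2\a-3}{f}$ — wait, more usefully, the derivative of $\log f$ near $p$ is controlled by $-2u$, which has residue $2$ (for $\varepsilon=-1$), so $f$ behaves like $(x-p)^{2}$ near $p$... this would contradict a simple pole, signalling that for $\varepsilon = -1$ in fact $f(x;\a-1)$ has a double \emph{zero}-type behavior is wrong; rather $f \sim \text{const}\cdot(x-p)$, and I will extract the constant directly from \eqref{f-der}. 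Once $f(x;\a-1) = b_1(x-p) + O((x-p)^2)$ with $b_1$ determined, the identity $\dfrac{2\a-3}{f(x;\a-1)} = \dfrac{2\a-3}{b_1(x-p)}(1 + O(x-p))$ combined with $-u(x;\a-1) = \dfrac{1}{x-p} + O(1)$ shows the two simple poles cancel iff $\dfrac{2\a-3}{b_1} = -1$, which I will verify is exactly what \eqref{f-der} delivers. That final consistency check is the heart of part (ii), and once it is in place both parts of the lemma follow.
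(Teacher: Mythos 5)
Your overall strategy is the paper's: expand $u(x;\a-1)$ in a Laurent series at the pole, track the local behavior of $f(x;\a-1)=2u^2-2u'+x$, and read off what the B\"{a}cklund transformation \eqref{B-transfmn} produces. Your shortcut of extracting the linear coefficient of $f$ at a residue-$(-1)$ pole from the identity \eqref{f-der}, rather than from the explicit Laurent coefficients, is legitimate and indeed gives $f(x;\a-1)=(1-2\a)(x-p_{-1})+O((x-p_{-1})^2)$, which is exactly \eqref{f(a-1)-zero-res(-1)} in the paper. However, there is a genuine error that makes your decisive step in part (ii) fail as written: you take the numerator of the fraction in \eqref{B-transfmn} to be $2(\a-1)-1=2\a-3$. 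The transformation maps the seed $u(x;\a-1)$ to $u(x;\a)$ with numerator $2\a-1$ (i.e.\ $2(\a-1)+1$, not $2(\a-1)-1$). With your numerator, the "final consistency check" you propose cannot succeed: \eqref{f-der} forces the slope $b_1=1-2\a$, and $\tfrac{2\a-3}{1-2\a}\neq -1$, so your computation would leave a residual simple pole of residue $\tfrac{-2}{1-2\a}$ at $p_{-1}$, contradicting the statement you are trying to prove. With the correct numerator the check closes immediately, $\tfrac{2\a-1}{1-2\a}=-1$, and the cancellation is exact, which is how the paper concludes. Part (i) is unaffected by this slip, since there only the fact that the fraction vanishes at the double pole of $f$ matters, not the numerator's value.

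A secondary inaccuracy: your description of the resonance structure is backwards. In the PII Laurent expansion $u(x;\a-1)=\tfrac{\varepsilon}{x-p}+c_1(x-p)+c_2(x-p)^2+\cdots$, the coefficient $c_2$ is \emph{forced}, $c_2=\tfrac{(\a-1)-\varepsilon}{4}$ (these are precisely the values $\tfrac{\a-2}{4}$ and $\tfrac{\a}{4}$ appearing in the paper's expansions, and $c_2$ is what a direct computation of $f$'s linear term uses), while the free parameter sits at order $(x-p)^3$. Your \eqref{f-der} route only needs $c_0=0$ and $c_1=-\varepsilon p/6$, so this does not sink the argument, but the claim should be corrected; and you should also state explicitly that $b_1=1-2\a\neq 0$ (true here since $\a\neq 1/2$ in the regime considered), so that $f$ has a genuine simple zero and $p_{-1}$ is a regular point rather than a higher-order singularity.
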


\begin{proof}
To prove part (i), let us assume $p_{+1}$ is a pole of $u(x; \a -1)$ with residue $+1$. Then we have the following expansion near $p_{+1}$
\begin{equation}\label{u(a-1)-pole-res(+1)}
u(x;\a-1)= \frac{1}{x-p_{+1}}-\frac{p_{+1}}{6}(x-p_{+1})+\frac{\a-2}{4}(x-p_{+1})^2+O((x-p_{+1})^3), \qquad \textrm{as } x\to p_{+1}.
\end{equation}
Using the definition of $f(x;\a)$ in \eqref{f-def} and the above formula, we can see that $p_{+1}$ is a double pole of $f(x;\a -1)$:
\begin{equation}\label{f(a-1)-pole-res(+1)}
f(x;\a -1)=\frac{4}{(x-p_{+1})^2}+O((x-p_{+1})^{-3}), \qquad \textrm{as } x\to p_{+1}.
\end{equation}
From the B\"{a}cklund transformation \eqref{B-transfmn}, it is easy to obtain
\begin{equation}
u(x; \a) = -\frac{1}{x-p_{+1}}+\frac{p_{+1}}{6}(x-p_{+1})+ \frac{\a+1}{4}(x-p_{+1})^2+O((x-p_{+1})^3), \qquad \textrm{as } x\to p_{+1}.
\end{equation}
Therefore, $p_{+1}$ is a pole of $u(x;\a)$ with residue $-1$.

Similarly, we can prove part (ii). If $p_{-1}$ is a pole of $u(x; \a -1)$ with residue $-1$, the following expansion near $p_{-1}$ holds:
\begin{equation}
u(x;\a-1)= \frac{-1}{x-p_{-1}}+\frac{p_{-1}}{6}(x-p_{-1})+\frac{\a}{4}(x-p_{-1})^2+O((x-p_{-1})^3), \qquad \textrm{as } x\to p_{-1}.
\end{equation}
Then, $p_{-1}$ is a simple zero of $f(x;\a -1)$:
\begin{equation}\label{f(a-1)-zero-res(-1)}
f(x;\a -1)=(-2\a+1)(x-p_{-1})+O((x-p_{-1})^2), \qquad \textrm{as } x\to p_{-1}.
\end{equation}
As a consequence, the second term in the B\"{a}cklund transformation \eqref{B-transfmn} induces a simple pole with residue $-1$, which cancels the pole contribution from the first term. Thus, $p_{-1}$ is a regular point of $u(x;\a)$.
\end{proof}


\section{Proof of Theorems \ref{thm-pole-number} and \ref{main-thm-AS}}\label{sec-main-proof}
\subsection{Properties of real poles of qAS solutions}

For $\a\in (n-\frac{1}{2}, n+\frac{1}{2})$, we will study the qAS solutions for $n=1,2,3$, which possess all properties listed in Theorems \ref{thm-pole-number} and \ref{main-thm-AS}. Then, we will prove our results by mathematical induction for all $n \in \mathbb{N}$. First, we show that, for $\a\in (\frac{1}{2}, \frac{3}{2})$, the qAS solutions have only one pole on the real line.
\begin{prop}\label{thm-one-pole}
For $\a\in (\frac{1}{2}, \frac{3}{2})$, the qAS solutions of \eqref{PII-def} have only one pole on the real line with residue $+1$.
\end{prop}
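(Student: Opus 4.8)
The plan is to obtain the qAS solution $u_{\textrm{qAS}}(x;\a)$ for $\a\in(\tfrac12,\tfrac32)$ by applying the B\"acklund transformation \eqref{B-transfmn} to the classical AS solution $u_{\textrm{AS}}(x;\a-1)$ with $\a-1\in(-\tfrac12,\tfrac12)$, which is real and pole-free on $\mathbb{R}$, and then to count the zeros of the denominator $f(x;\a-1)=2u_{\textrm{AS}}^{2}(x;\a-1)-2u_{\textrm{AS}}'(x;\a-1)+x$. By \eqref{B-transfmn}, a pole of $u_{\textrm{qAS}}(x;\a)$ on the real axis can only arise at a real zero of $f(x;\a-1)$ (the seed being pole-free), and at each such zero the transformation produces a genuine simple pole since the numerator $2\a-1$ is nonzero for $\a\neq\tfrac12$. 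So the proposition reduces to showing $f(x;\a-1)$ has exactly one real zero, and that the residue there is $+1$.

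The key steps are as follows. First, invoke Lemma~\ref{lemma-u&f}(ii): since $\a-1\neq-\tfrac12$ and $u_{\textrm{AS}}(x;\a-1)$ is continuous on all of $\mathbb{R}$, $f(x;\a-1)$ has \emph{at most one} simple zero on $\mathbb{R}$. It remains to show it has \emph{at least one}. For this I would examine the behavior of $f(x;\a-1)$ as $x\to\pm\infty$ using the known asymptotics of the AS solution. As $x\to+\infty$, $u_{\textrm{AS}}(x;\a-1)=B(x;\a-1)+k_{\a-1}\Ai(x)(1+o(1))\to0$ and $u_{\textrm{AS}}'(x;\a-1)\to0$ (both exponentially close to the $B$-series, which is $O(1/x)$), hence $f(x;\a-1)=x+o(1)\to+\infty$. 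As $x\to-\infty$, the oscillatory asymptotics \eqref{asy-neg-AS} give $u_{\textrm{AS}}=O(|x|^{-1/4})$ but $u_{\textrm{AS}}'=O(|x|^{1/4})$ from differentiating the $\cos\{\tfrac23(-x)^{3/2}-\cdots\}$ factor; so $2u_{\textrm{AS}}^{2}=O(|x|^{-1/2})$ is negligible while $-2u_{\textrm{AS}}'$ oscillates with amplitude $O(|x|^{1/4})$, dominated by the linear term $x\to-\infty$. Thus $f(x;\a-1)\to-\infty$ as $x\to-\infty$. By the intermediate value theorem, $f(x;\a-1)$ has at least one real zero, and combined with the "at most one" bound, exactly one; call it $p$. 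Finally, to fix the residue: at $x=p$ we have $f(p;\a-1)=0$ with $u_{\textrm{AS}}$ regular, so by Lemma~\ref{lemma-u&f}(i), $f'(p;\a-1)=2\a-1>0$; the Laurent expansion of \eqref{B-transfmn} near $p$ then reads $u_{\textrm{qAS}}(x;\a)=\dfrac{2\a-1}{f'(p;\a-1)}\dfrac{1}{x-p}+O(1)=\dfrac{1}{x-p}+O(1)$, so the residue is $+1$. (Alternatively, this is immediate from Lemma~\ref{lemma-bt-u} read in reverse together with the fact that PII poles have residue $\pm1$ and only residue-$+1$ poles survive the forward transformation while residue-$-1$ ones are killed — but the direct computation is cleanest here.)

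The main obstacle I anticipate is the lower bound step, i.e.\ rigorously justifying $f(x;\a-1)\to-\infty$ as $x\to-\infty$. The subtlety is that $u_{\textrm{AS}}'$ is unbounded and oscillatory there, so one must be careful that the $-2u_{\textrm{AS}}'$ contribution, while of larger order than $u_{\textrm{AS}}^{2}$, is still only $O(|x|^{1/4})=o(|x|)$ and therefore cannot prevent $f$ from going to $-\infty$; this needs the error terms in \eqref{asy-neg-AS} (and in its derivative) to be controlled uniformly, which one gets from the rigorous RH-based asymptotics cited in the introduction. The $+\infty$ side is comparatively routine given the exponential closeness of $u_{\textrm{AS}}$ to the $B$-series. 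Once both signs at infinity are established, Lemma~\ref{lemma-u&f} does all the remaining work and the residue computation is a one-line Laurent expansion.
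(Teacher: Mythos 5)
Your proposal is correct and follows essentially the same route as the paper: apply the B\"acklund transformation to the pole-free seed $u_{\textrm{AS}}(x;\a-1)$, note from the asymptotics \eqref{asy-pos-AS}--\eqref{asy-neg-AS} that $f(x;\a-1)\sim x$ as $x\to\pm\infty$ (hence a sign change and at least one zero), use Lemma~\ref{lemma-u&f} to conclude there is exactly one simple zero with $f'=2\a-1$, and read off the residue $+1$ from the transformation. The only difference is that you spell out the $x\to-\infty$ estimate and the Laurent expansion in more detail than the paper, which simply asserts $f(x;\a-1)\sim x$; no gap either way.
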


\begin{proof}
For $\a\in (\frac{1}{2}, \frac{3}{2})$, $u_{\textrm{AS}}(x;\a-1)$ is the pole-free AS solution on the real line. To get the unique pole of $u_{\textrm{qAS}}(x;\a)$, it is enough to show that $f(x;\a-1)$ in the B\"{a}cklund transformation \eqref{B-transfmn} has only one zero on the real line. Recalling the asymptotics of $u_{\AS}(x;\a-1)$ in \eqref{asy-pos-AS} and \eqref{asy-neg-AS}, we have from \eqref{f-def}
\begin{equation}
f(x;\a-1)\sim x, \qquad \textrm{as}\,\, x\to \pm \infty.
\end{equation}
Moreover, since $u_{\AS}(x; \a-1)$ is smooth on the real line, $f(x;\a-1)$ is also continuous on the real line. The above asymptotics imply that $f(x;\a-1)$ has zeros on the real line.

According Lemma \ref{lemma-u&f}, there is a unique point $x_1$ such that $f(x_1; \a-1)=0$ and $f'(x_1,\a-1)=2\a -1$. Then, using the B\"{a}cklund transformation \eqref{B-transfmn}, we conclude that $u_{\textrm{qAS}}(x;\a)$ has only one pole on the real line with residue $+1$.
\end{proof}

\begin{prop}\label{thm-two-poles}
For $\a\in (\frac{3}{2}, \frac{5}{2})$, the qAS solutions of \eqref{PII-def} have two poles on the real line with residues $\pm 1$. Moreover, we have $p_{1,+1}(\a) < p_{2,-1}(\a)$.
\end{prop}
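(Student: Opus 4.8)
The plan is to proceed by one more application of the Bäcklund transformation, this time with the \emph{quasi}-AS solution $u_{\textrm{qAS}}(x;\a-1)$ (with $\a-1\in(\tfrac12,\tfrac32)$) as the seed solution. By Proposition \ref{thm-one-pole}, $u_{\textrm{qAS}}(x;\a-1)$ has exactly one real pole $p_{1,+1}(\a-1)$, and its residue is $+1$. By Lemma \ref{lemma-bt-u}(i), this pole becomes a pole of $u_{\textrm{qAS}}(x;\a)$ with residue $-1$; call it $p_{2,-1}(\a)=p_{1,+1}(\a-1)$. So the real poles of $u_{\textrm{qAS}}(x;\a)$ are precisely: the old pole carried over with flipped residue, plus whatever new poles are created by zeros of $f(x;\a-1)$ in the regions where $u_{\textrm{qAS}}(x;\a-1)$ is finite. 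Thus the heart of the matter is counting the zeros of $f(x;\a-1)$ on $\mathbb R$.

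The key step is to analyze $f(x;\a-1)=2u_{\textrm{qAS}}^2(x;\a-1)-2u_{\textrm{qAS}}'(x;\a-1)+x$ on the two intervals $I_-=(-\infty,p_{1,+1}(\a-1))$ and $I_+=(p_{1,+1}(\a-1),+\infty)$ on which $u_{\textrm{qAS}}(x;\a-1)$ is continuous. First I would record the behavior of $f(x;\a-1)$ near the pole $p:=p_{1,+1}(\a-1)$: from the local expansion \eqref{f(a-1)-pole-res(+1)}, $f(x;\a-1)\sim 4/(x-p)^2\to+\infty$ as $x\to p$ from either side. Second, from the asymptotics \eqref{asy-pos-AS}--\eqref{asy-neg-AS} of the (q)AS solution, $f(x;\a-1)\sim x$ as $x\to\pm\infty$; so $f\to-\infty$ as $x\to-\infty$ and $f\to+\infty$ as $x\to+\infty$. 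On $I_-$: $f$ goes from $-\infty$ (at $-\infty$) to $+\infty$ (as $x\to p^-$), so it has at least one zero there; by Lemma \ref{lemma-u&f}(ii) (valid since $\a-1\ne-\tfrac12$) it has \emph{exactly} one, a simple zero $x_1\in I_-$ with $f'(x_1;\a-1)=2(\a-1)+1=2\a-1>0$. On $I_+$: $f$ goes from $+\infty$ (as $x\to p^+$) to $+\infty$ (at $+\infty$); by Lemma \ref{lemma-u&f}(ii) it has at most one zero on $I_+$, but since it approaches $+\infty$ at both ends and the zero, if any, would be simple with positive derivative $2\a-1$ — which is incompatible with crossing from positive back to positive — there is \emph{no} zero on $I_+$. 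Hence $f(x;\a-1)$ has exactly one real zero $x_1$, lying to the left of $p$.

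From this the conclusion follows quickly. The zero $x_1$ of $f(x;\a-1)$ is not a pole of $u_{\textrm{qAS}}(x;\a-1)$, so the Bäcklund transformation \eqref{B-transfmn} creates a simple pole of $u_{\textrm{qAS}}(x;\a)$ at $x_1$; the sign of the residue is read off from $f'(x_1;\a-1)=2\a-1>0$, giving residue $+1$ (the same computation as in Proposition \ref{thm-one-pole}, where a positive slope of $f$ at its zero yields a $+1$ residue). Therefore $u_{\textrm{qAS}}(x;\a)$ has exactly two real poles: $p_{1,+1}(\a)=x_1$ with residue $+1$, and $p_{2,-1}(\a)=p_{1,+1}(\a-1)$ with residue $-1$. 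Since $x_1\in I_-=(-\infty,p_{1,+1}(\a-1))$, we have $p_{1,+1}(\a)<p_{2,-1}(\a)$, as claimed. (One also needs that $x_1$ is a regular point of the seed, hence genuinely new, and that no other poles of $u(x;\a)$ can arise — but every pole of $u(x;\a)$ must come either from a pole of $u(x;\a-1)$ or from a zero of $f(x;\a-1)$, both of which we have fully enumerated.)

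The main obstacle is the bookkeeping on the interval $I_+$: one must rule out a zero of $f(x;\a-1)$ there despite $f$ having the same sign ($+\infty$) at both endpoints. The clean way is exactly the interplay of Lemma \ref{lemma-u&f}: any zero of $f$ on $I_+$ would be simple with derivative $2\a-1$ of a \emph{fixed sign}, so $f$ cannot both approach $+\infty$ as $x\to p^+$ and return to $+\infty$ at $+\infty$ while touching zero only once with a one-signed slope; a second zero is forbidden by Lemma \ref{lemma-u&f}(ii). A small point to be careful about is that \eqref{f(a-1)-pole-res(+1)} only gives the leading $4/(x-p)^2$ term, which is enough to guarantee $f\to+\infty$ near $p$ from both sides, so the endpoint-sign analysis on $I_-$ and $I_+$ is rigorous.
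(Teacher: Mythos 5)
Your argument is correct and follows essentially the same route as the paper: take the qAS solution at $\a-1$ (with its single pole from Proposition \ref{thm-one-pole}) as the seed, map that pole to a residue $-1$ pole via Lemma \ref{lemma-bt-u}(i), and count zeros of $f(x;\a-1)$ on the two continuity intervals using $f\sim x$ at $\pm\infty$, $f\to+\infty$ at the pole, and Lemma \ref{lemma-u&f} to get exactly one zero (hence one new residue $+1$ pole) to the left of the old pole and none to its right. The exclusion of a zero on $(p_{1,+1}(\a-1),+\infty)$ — same endpoint limits $+\infty$, simple zeros with fixed positive slope forcing a second zero, contradicting Lemma \ref{lemma-u&f}(ii) — is exactly the paper's contradiction argument.
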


\begin{proof}
For $\a\in (\frac{3}{2}, \frac{5}{2})$, let $x_{1, +1}$ be the unique pole of $u_{\textrm{qAS}}(x;\a-1)$ with residue $+1$. Then, from part (i) in Lemma \ref{lemma-bt-u}, $x_{1, +1}$ is the pole of $u_{\textrm{qAS}}(x;\a)$ with residue $-1$. To find the other pole of $u_{\textrm{qAS}}(x;\a)$, we make use of the behavior of $u_{\textrm{qAS}}(x;\a-1)$ near $x_{1, +1}$ given in \eqref{u(a-1)-pole-res(+1)}.
Combining the formulas \eqref{asy-pos-AS}-\eqref{asy-neg-AS}, we have from \eqref{f-def}
\begin{equation}\label{f(a-1)-infty-asy-1}
f(x;\a-1)\sim x,\qquad \textrm{as}\,\, x \to \pm\infty \quad \textrm{and} \quad f(x;\a-1)\to +\infty, \qquad \textrm{as}\,\, x \to x_{1, +1}.
\end{equation}
Based on the similar analysis in Proposition \ref{thm-one-pole}, we get that $f(x;\a-1)$ has only one zero $x_0$ in $(-\infty, x_{1, +1})$ and $f'(x_0; \a-1)= 2\a-1$. Therefore, $x_0$ is the unique pole of $u_{\textrm{qAS}}(x;\a)$ in $(-\infty, x_{1, +1})$ with residue $+1$.

Finally, to show that there is no other poles, we verify that $u_{\textrm{qAS}}(x;\a)$ has no pole in $(x_{1, +1}, +\infty)$. Otherwise, $f(x;\a-1)$ must have zeros in $(x_{1, +1}, +\infty)$. Due to the asymptotics in \eqref{f(a-1)-infty-asy-1}, $f(x;\a-1)$ tends to $+\infty$ at both endpoints of the interval $(x_{1, +1}, +\infty)$.  Then, $f(x;\a-1)$ has at least two zeros in $(x_{1, +1}, +\infty)$, where we use the fact that all zeros of $f(x;\a-1)$ are simple. So, we arrive at a contradiction with part (ii) of Lemma \ref{lemma-u&f}.

This completes the proof of our proposition.
\end{proof}

\begin{prop}\label{thm-three-poles}
For $\a\in (\frac{5}{2}, \frac{7}{2})$, the qAS solutions of \eqref{PII-def} have three poles on the real line with residues $\pm 1$. Moreover, we have $p_{1,+1}(\a) < p_{2,-1}(\a)< p_{3,+1}(\a)$.
\end{prop}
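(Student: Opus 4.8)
The plan is to proceed by the same induction/bootstrapping pattern established in Propositions \ref{thm-one-pole} and \ref{thm-two-poles}, now applied to the seed solution $u_{\textrm{qAS}}(x;\a-1)$ with $\a-1 \in (\frac{3}{2},\frac{5}{2})$. By Proposition \ref{thm-two-poles}, this seed solution has exactly two real poles $x_{1,+1} < x_{2,-1}$ with residues $+1$ and $-1$ respectively. First I would invoke Lemma \ref{lemma-bt-u}: part (i) tells us $x_{1,+1}$ becomes a pole of $u_{\textrm{qAS}}(x;\a)$ with residue $-1$, and part (ii) tells us $x_{2,-1}$ is a regular point of $u_{\textrm{qAS}}(x;\a)$. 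So one existing pole survives (with flipped residue) and one is annihilated; any remaining poles of $u_{\textrm{qAS}}(x;\a)$ must come from new zeros of the denominator $f(x;\a-1)$ defined in \eqref{f-def}.

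Next I would analyze the sign/limit behavior of $f(x;\a-1)$ on the real line. From \eqref{asy-pos-AS}--\eqref{asy-neg-AS} we have $f(x;\a-1) \sim x$ as $x \to \pm\infty$, so $f \to -\infty$ as $x \to -\infty$ and $f \to +\infty$ as $x \to +\infty$. At $x_{1,+1}$, the residue-$+1$ pole of $u_{\textrm{qAS}}(x;\a-1)$, formula \eqref{f(a-1)-pole-res(+1)} shows $f(x;\a-1) \to +\infty$; at $x_{2,-1}$, the residue-$(-1)$ pole, formula \eqref{f(a-1)-zero-res(-1)} shows $f$ has a simple zero there with $f'(x_{2,-1};\a-1) = -2\a+1 = 1-2\a < 0$ (since $\a > 5/2$). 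The real line is thus split by the two poles into three intervals $(-\infty, x_{1,+1})$, $(x_{1,+1}, x_{2,-1})$, $(x_{2,-1}, +\infty)$, on each of which $u_{\textrm{qAS}}(x;\a-1)$ is continuous, hence Lemma \ref{lemma-u&f}(ii) applies: $f(x;\a-1)$ has at most one simple zero per interval. On $(-\infty, x_{1,+1})$, $f$ runs from $-\infty$ to $+\infty$, forcing exactly one zero, call it $x_0$; by Lemma \ref{lemma-u&f}(i), $f'(x_0;\a-1) = 2\a-1 > 0$, consistent with the sign change, and this zero gives a pole of $u_{\textrm{qAS}}(x;\a)$ with residue $+1$. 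On $(x_{1,+1}, x_{2,-1})$, $f$ runs from $+\infty$ down to $0$ (approaching $x_{2,-1}$ from the left, where $f' < 0$ means $f > 0$ just left of $x_{2,-1}$), so $f$ stays positive on the interior and its only zero is the endpoint $x_{2,-1}$ itself — no interior zero, hence no new pole here. On $(x_{2,-1}, +\infty)$, $f$ leaves $x_{2,-1}$ with $f' < 0$, so $f < 0$ just to the right, while $f \to +\infty$ as $x \to +\infty$; this forces exactly one zero $x_3$ in the interior, with $f'(x_3;\a-1) = 2\a - 1 > 0$, giving a pole of $u_{\textrm{qAS}}(x;\a)$ with residue $+1$.

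Collecting these: $u_{\textrm{qAS}}(x;\a)$ has exactly three real poles, namely $x_0$ (residue $+1$), $x_{1,+1}$ (residue $-1$), and $x_3$ (residue $+1$), and they are ordered $x_0 < x_{1,+1} < x_3$ because $x_0 \in (-\infty, x_{1,+1})$ and $x_3 \in (x_{2,-1}, +\infty)$ with $x_{1,+1} < x_{2,-1}$. Relabeling as $p_{1,+1}(\a) = x_0$, $p_{2,-1}(\a) = x_{1,+1}$, $p_{3,+1}(\a) = x_3$ gives the claimed interlacing $p_{1,+1}(\a) < p_{2,-1}(\a) < p_{3,+1}(\a)$.

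The one delicate point — and the place I expect to have to argue most carefully — is ruling out additional zeros of $f(x;\a-1)$ in the outer intervals and correctly pinning down the sign of $f$ immediately adjacent to the residue-$(-1)$ pole $x_{2,-1}$: one must combine the local expansion \eqref{f(a-1)-zero-res(-1)} (which fixes the one-sided signs via $f' = 1-2\a < 0$) with Lemma \ref{lemma-u&f}(ii) (at most one simple zero per continuity interval, so the endpoint zero exhausts the quota in the middle interval) to conclude there is no interior zero in $(x_{1,+1}, x_{2,-1})$ and precisely one in $(x_{2,-1}, +\infty)$. This is exactly the contradiction-with-Lemma-\ref{lemma-u&f}(ii) device used in Proposition \ref{thm-two-poles}, just applied interval-by-interval; the bookkeeping of which endpoint contributes $+\infty$ versus a zero, and with what sign of derivative, is the main thing to get right.
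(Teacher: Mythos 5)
Your argument follows the paper's proof essentially step for step: take the seed $u_{\textrm{qAS}}(x;\a-1)$ with its two poles from Proposition \ref{thm-two-poles}, use Lemma \ref{lemma-bt-u} to map $x_{1,+1}$ to a residue $-1$ pole and to make $x_{2,-1}$ regular, combine $f(x;\a-1)\sim x$ at $\pm\infty$ with the local expansions \eqref{f(a-1)-pole-res(+1)} and \eqref{f(a-1)-zero-res(-1)}, and apply Lemma \ref{lemma-u&f} interval by interval to locate exactly one new zero of $f$ in $(-\infty,x_{1,+1})$ and one in $(x_{2,-1},+\infty)$, each with $f'=2\a-1>0$ and hence giving a residue $+1$ pole; the pole count and ordering you obtain agree with the paper. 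One small repair: saying the endpoint zero at $x_{2,-1}$ ``exhausts the quota'' of Lemma \ref{lemma-u&f}(ii) on the middle interval is not quite right, because $u(x;\a-1)$ has a pole at $x_{2,-1}$, so that zero does not lie in an interval of continuity and does not count toward the lemma's bound; the correct exclusion of an interior zero of $f$ in $(x_{1,+1},x_{2,-1})$ is that any such zero would have slope $2\a-1>0$ by Lemma \ref{lemma-u&f}(i), forcing $f$ to be negative immediately to its left, and since $f\to+\infty$ as $x\to x_{1,+1}^{+}$ this would create a second zero inside the continuity interval, contradicting Lemma \ref{lemma-u&f}(ii) --- which is precisely the argument behind the paper's terse ``by Lemma \ref{lemma-u&f} again''.
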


\begin{proof}
For $\a\in (\frac{5}{2}, \frac{7}{2})$, let $x_{1, +1}<x_{2, -1}$ be the two poles of $u_{\textrm{qAS}}(x;\a-1)$ with residues $+1$ and $-1$, respectively. Then, from part (i) in Lemma \ref{lemma-bt-u}, $x_{1, +1}$ is the pole of $u_{\textrm{qAS}}(x;\a)$ with residue $-1$. Using the similar analysis in Proposition \ref{thm-two-poles}, it is easy to show that there is a unique pole of $u_{\textrm{qAS}}(x;\a)$ in $(-\infty, x_{1, +1})$ with residue $+1$. To find the last pole of $u_{\textrm{qAS}}(x;\a)$, let us study the property of $f(x;\a-1)$ in $(x_{1, +1}, +\infty)$. Using a similar computation in \eqref{f(a-1)-infty-asy-1}, we have
\begin{eqnarray}
f(x;\a-1)\to  +\infty, \quad  \textrm{as } x\to x_{1, +1}, \quad \textrm{and} \quad
f(x;\a-1)\to +\infty, \quad  \textrm{as } x\to +\infty.
\end{eqnarray}
Note that, although $x_{2, -1} \in (x_{1, +1}, +\infty)$, it is a regular point of $u_{\textrm{qAS}}(x;\a)$; see part (ii) of Lemma \ref{lemma-bt-u}. Moreover, we have from \eqref{f(a-1)-zero-res(-1)}
\begin{equation}
f(x_{2, -1};\a-1) = 0 \quad \textrm{and} \quad f'(x_{2, -1};\a-1)=-2\a+1<0.
\end{equation}
As $f(x;\a-1)$ is continuous and differentiable on $(x_{1, +1}, +\infty)$, the above two formulas yield there must exist $x_3>x_{2, -1}$ such that $f(x_3;\a-1)=0$ and $f'(x_3;\a-1)>0$. Note that $u_{\textrm{qAS}}(x;\a-1)$ is continuous on $(x_{2, -1}, +\infty)$. According to Lemma \ref{lemma-u&f}, $x_3$ is the unique zero of $f(x;\a-1)$ in $(x_{2, -1}, +\infty)$ and $f'(x_3;\a-1)= 2\a -1$. Therefore, $x_3$ must be a pole of $u_{\textrm{qAS}}(x;\a)$ with residue $+1$. By Lemma \ref{lemma-u&f} again, $u_{\textrm{qAS}}(x;\a)$ has no pole in $(x_{1, +1}, x_{2, -1})$.

This completes the proof of our proposition.
\end{proof}

Finally, we prove the statements involving the qAS solutions in Theorems \ref{thm-pole-number} and \ref{main-thm-AS} by mathematical induction.

\medskip
\noindent\emph{Proof of Theorem \ref{thm-pole-number} and \ref{main-thm-AS}.} The above three propositions indicate that Theorems \ref{thm-pole-number} and \ref{main-thm-AS} are true for $n=1,2,3$. Assume the results also hold for $n= m$, let us consider the case for $m+1$. We denote the poles of $u_{\textrm{qAS}}(x;\a)$ and $u_{\textrm{qAS}}(x;\a+1)$ by $p_{i,\pm1}(\a)$ and $\widetilde{p}_{j,\pm1}(\a+1)$, respectively.

As the residue of the smallest pole of $u_{\textrm{qAS}}(x;\a)$ is 1, following similar analysis in Proposition \ref{thm-two-poles}, there exists a unique pole $x_0$ of $u_{\textrm{qAS}}(x;\a+1)$ in $(-\infty, p_{1,+1}(\a))$ with residue $1$. This proves part (a) of Theorem \ref{main-thm-AS}.

Let $p_{k,+1}(\a)< p_{k+1,-1}(\a) < p_{k+2,+1}(\a)$ be three consecutive poles of $u_{\textrm{qAS}}(x;\a)$. According to Lemma \ref{lemma-bt-u}, they are mapped to $\widetilde{p}_{j, -1}(\a+1)< \eta_0 < \widetilde{p}_{j+2, -1}(\a+1)$, where $\eta_0$ is the zero of $f(x;\a)$ and a regular point of $u_{\textrm{qAS}}(x;\a+1)$. Since $p_{k,+1}(\a)$ and $p_{k+2,+1}(\a)$ are poles of $u_{\textrm{qAS}}(x;\a)$ with residue $+1$, we have from \eqref{f(a-1)-pole-res(+1)}
\begin{equation}
  f(x;\a)\to  +\infty, \quad  \textrm{as } x\to p_{k,+1}(\a) \textrm{ and } x \to  p_{k+2,+1}(\a).
\end{equation}
Using the similar analysis in Proposition \ref{thm-three-poles}, there exists a unique point $x^*\in (p_{k+1,-1}(\a), p_{k+2,+1}(\a))$ such that $f(x^*;\a)=0$ and $f'(x^*;\a)= 2\a +1>0$. This shows that $x^*$ is the unique pole of $u_{\textrm{qAS}}(x;\a+1)$ with residue $+1$ in $(\eta_0, \widetilde{p}_{j+2, -1}(\a+1))$. Therefore, we obtain three consecutive poles of $u_{\textrm{qAS}}(x;\a+1)$: $\widetilde{p}_{j, -1}(\a+1)< \widetilde{p}_{j+1, +1}(\a+1) < \widetilde{p}_{j+2, -1}(\a+1)$ with $\widetilde{p}_{j+1, +1}(\a+1) = x^*$. Thus, we prove the interlacing property of poles with residue $\pm 1$, i.e., part (b) of Theorem \ref{main-thm-AS}. The part (c) of Theorem \ref{main-thm-AS} is indeed Lemma \ref{lemma-bt-u}.

The proof of the pole numbers and the largest pole also follows from the arguments above. Lemma \ref{lemma-bt-u} and arguments in the previous paragraph imply that, for the interval $I= [p_{k_1,+1}(\a), p_{k_2,+1}(\a)] $ with any $k_1<k_2$, $u_{\textrm{qAS}}(x;\a+1)$ has the same number of poles as $u_{\textrm{qAS}}(x;\a)$ in $I$. When $m$ is odd, as the residues of both the smallest and largest pole of $u_{\textrm{qAS}}(x;\a)$ are 1, $u_{\textrm{qAS}}(x;\a+1)$ has $m$ poles in $[p_{1,+1}(\a), p_{m,+1}(\a)]$ with ${p}_{2,-1}(\a+1)=p_{1,+1}(\a)$ and ${p}_{m+1,-1}(\a+1)=p_{m,+1}(\a)$. Recalling that there is one more pole with residue 1 in  $(-\infty, p_{1,+1}(\a))$, then $u_{\textrm{qAS}}(x;\a+1)$ has $m+1$ poles with the largest pole ${p}_{m+1,-1}(\a+1)=p_{m,+1}(\a)$. When $m$ is even, the situation is similar. Now, $u_{\textrm{qAS}}(x;\a+1)$ has $m$ poles in $(-\infty , p_{m-1,+1}(\a)]$. By the similar analysis in Proposition \ref{thm-three-poles}, there is one more pole with residue $+1$ in $(p_{m,-1}(\a), + \infty)$. Thus, $u_{\textrm{qAS}}(x;\a+1)$ has $m+1$ poles with the largest pole ${p}_{m+1,+1}(\a+1)>p_{m,-1}(\a)$. This proves Theorem \ref{thm-pole-number} and part (d) of Theorem \ref{main-thm-AS}.

Then, we finish proof of the results involving the qAS solutions in Theorem \ref{thm-pole-number} and \ref{main-thm-AS}.
 \hfill $\Box$

\subsection{Properties of real poles of qHM solutions}


Using the same idea in the previous section, we first show that, for $\a\in (\frac{1}{2}, \frac{3}{2})$, the qHM solutions corresponding to the asymptotics in \eqref{asy-pos-qHM} and \eqref{asy-neg-qHM} have only one pole on the real line.
\begin{prop}\label{thm-one-pole-HM}
For $\a\in (\frac{1}{2}, \frac{3}{2})$, the qHM solutions of \eqref{PII-def} have only one pole on the real line with residue $+1$.
\end{prop}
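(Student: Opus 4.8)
The plan is to follow the proof of Proposition~\ref{thm-one-pole} almost line for line; the only genuinely new point is the behavior at $-\infty$. First I would identify the seed of the B\"acklund transformation \eqref{B-transfmn}: by the construction of the qHM solutions (see Remark~\ref{HM-qHM}), for $\a\in(\tfrac12,\tfrac32)$ the solution $u_{\textrm{qHM}}(x;\a)$ is the B\"acklund transform of a pole-free Hastings--McLeod seed $u(x;\a-1)$ --- namely $u_{\textrm{pHM}}(x;\a-1)$ when $\a\in(\tfrac12,1]$ (with the choice $\sigma=-1$ of \eqref{HMsol-1} when $\a=1$) and $u_{\textrm{sHM}}(x;\a-1)$ when $\a\in(1,\tfrac32)$. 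In every one of these cases the seed is continuous on $\mathbb{R}$, obeys $u(x;\a-1)\sim(\a-1)/x$ as $x\to+\infty$, and obeys \eqref{asy-neg-HM} with $\sigma=-1$, i.e. $u(x;\a-1)=-\sqrt{-x/2}-\tfrac{\a-1}{2x}+O((-x)^{-3/2})$ as $x\to-\infty$. Since $u(x;\a-1)$ is pole-free, $f(x;\a-1)$ is continuous on $\mathbb{R}$ and the poles of $u_{\textrm{qHM}}(x;\a)$ correspond exactly to zeros of $f(x;\a-1)$ (the residue at such a zero being $\tfrac{2\a-1}{f'}=1\neq0$ by Lemma~\ref{lemma-u&f}), so it suffices to show $f(x;\a-1)$ has exactly one zero.

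Next I would compute the asymptotics of $f(x;\a-1)=2u^2(x;\a-1)-2u'(x;\a-1)+x$. As $x\to+\infty$ the terms $2u^2-2u'$ decay and $f(x;\a-1)\sim x\to+\infty$, exactly as in Proposition~\ref{thm-one-pole}. As $x\to-\infty$ the situation differs: $u(x;\a-1)$ is now unbounded, $2u^2(x;\a-1)\sim-x$ cancels the term $x$, and one must expand one further order. Inserting the asymptotics above and collecting the $O\big((-x)^{-1/2}\big)$ contributions gives
\[
f(x;\a-1)\sim -\frac{2\a-1}{\sqrt{-2x}},\qquad x\to-\infty,
\]
which tends to $0$ from below because $2\a-1>0$ for $\a\in(\tfrac12,\tfrac32)$.

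The rest is routine. Since $f(x;\a-1)$ is continuous on $\mathbb{R}$, negative for $x$ sufficiently large negative and positive for $x$ sufficiently large positive, the intermediate value theorem gives at least one zero, and Lemma~\ref{lemma-u&f}(ii) (applicable as $\a-1\neq-\tfrac12$) gives at most one; so there is a unique $x_1$ with $f(x_1;\a-1)=0$, and Lemma~\ref{lemma-u&f}(i) yields $f'(x_1;\a-1)=2\a-1>0$. Feeding this into \eqref{B-transfmn}: near $x_1$ one has $f(x;\a-1)=(2\a-1)(x-x_1)+O((x-x_1)^2)$, hence $\tfrac{2\a-1}{f(x;\a-1)}=\tfrac{1}{x-x_1}+O(1)$, while $-u(x;\a-1)$ is regular at $x_1$; thus $u_{\textrm{qHM}}(x;\a)$ has a single simple pole at $x_1$, with residue $+1$.

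The main obstacle --- essentially the only nonroutine step --- is pinning down the sign of $f(x;\a-1)$ as $x\to-\infty$. In contrast to the qAS case, where $f(x;\a-1)\sim x$ at both ends forces a clean sign change, here the leading $O(x)$ parts of $2u^2(x;\a-1)$ and $x$ cancel and the sign of $f$ at $-\infty$ is determined by a subleading term; the intermediate value argument succeeds only because one checks simultaneously that all the relevant pHM and sHM seeds carry $\sigma=-1$ and that $2\a-1>0$ on $(\tfrac12,\tfrac32)$, which together force $f(x;\a-1)\to0^-$. Once this sign is secured, the proof has the same structure as that of Proposition~\ref{thm-one-pole}.
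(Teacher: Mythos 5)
Your proposal is correct and follows essentially the same route as the paper: identify the pole-free pHM/sHM seed with $\sigma=-1$, compute $f(x;\a-1)\sim x$ as $x\to+\infty$ and $f(x;\a-1)\to 0^-$ like $-(2\a-1)/\sqrt{-2x}$ as $x\to-\infty$ (this matches the paper's $\frac{1-2\a}{2}(\frac{-x}{2})^{-1/2}$), then use continuity plus Lemma \ref{lemma-u&f} to get exactly one simple zero and hence one pole with residue $+1$. Your write-up simply makes explicit the seed identification and the sign bookkeeping that the paper leaves terse.
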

\begin{proof}
For $\a\in (\frac{1}{2}, \frac{3}{2})$, the qHM solutions $u_{\textrm{qHM}}(x;\a)$ are transform from the pHM and sHM solutions (cf. \eqref{HM-transform1}-\eqref{HM-transform3}), which are continuous on the real line. Using the asymptotics of these solutions in \eqref{asy-pos-HM}, \eqref{asy-neg-HM} and \eqref{HMsol-1}, we obtain from \eqref{f-def}
\begin{equation}
f(x;\a-1)\sim x, \quad \textrm{as}\,\, x\to  \infty \quad \textrm{and} \quad f(x;\a-1)\to \frac{1-2\a}{2}\left(\frac{-x}{2}\right)^{-1/2}, \quad \textrm{as}\,\, x\to -\infty.
\end{equation}
As $f(x;\a-1)$ is continuous on the real line, it has zeros on the real line. By the similar argument in Proposition \ref{thm-one-pole}, we know that $u_{\textrm{qHM}}(x;\a)$ has only one pole on the real line with residue $+1$.
\end{proof}

Following similar analysis in the previous section, it is easy to see that Propositions \ref{thm-two-poles} and \ref{thm-three-poles} also hold for the qHM solutions. Using mathematical induction again, we prove the statements involving the qHM solutions in Theorems \ref{thm-pole-number} and \ref{main-thm-AS}.

\section*{Acknowledgements}

The authors were partially supported by grants from the Research Grants Council of the Hong
Kong Special Administrative Region, China (Project No. CityU 11300814, CityU 11300115, CityU 11303016).

\end{document}